\documentclass[11p,leqno]{amsart}
\textheight 8in
\textwidth 5.5 in
\voffset -0.3in
\hoffset -0.6in
\usepackage{amsmath}
\usepackage{amsfonts}
\usepackage{amssymb}
\usepackage{graphicx}
\usepackage{color}
\usepackage{amsfonts,mathtools,mathrsfs}
\usepackage{amscd,esint}
\usepackage{amsmath,amsthm}
\usepackage{hyperref}
\usepackage{url}
\usepackage[backrefs]{amsrefs}
\usepackage{color}
\parindent 6pt
\parskip 4pt

\newtheorem{theorem}{Theorem}[section]\newtheorem{thm}[theorem]{Theorem}
\newtheorem*{theorem*}{Theorem}
\newtheorem{lemma}{Lemma}[section]
\newtheorem{corollary}[theorem]{Corollary}
\newtheorem{proposition}{Proposition}[section]



\def\Ric{\text{Ric}}

\def\l{\lambda}

\def\R{\mathbb R}

\def\vp{\varphi}

\def\Vol{\operatorname{Vol}}
\def\id{\operatorname{id}}
\def\Ric{\operatorname{Ric}}

\def\tr{\operatorname{tr}}


\numberwithin{equation}{section}

\begin{document}
\title[]{\bf Four-Dimensional Gradient Shrinking Solitons with Positive Isotropic Curvature}

\author{Xiaolong Li}
\address{Department of Mathematics, University of California, San Diego, La Jolla, CA 92093, USA}
\email{xil117@ucsd.edu}

\author{Lei Ni}
\address{Department of Mathematics, University of California, San Diego, La Jolla, CA 92093, USA}
\email{lni@math.ucsd.edu}

\author{Kui Wang}
\address{School of Mathematic Sciences, Soochow University, Suzhou, 215006, China}
\email{kuiwang@suda.edu.cn}

\thanks{ The research of the first author is partially supported by an Inamori fellowship and NSF grant DMS-1401500.  The research of the  second author is partially supported by NSF grant DMS-1401500.}

\maketitle

\begin{abstract}
We show that a four-dimensional complete gradient shrinking Ricci soliton with positive isotropic curvature
is either a quotient of $\mathbb{S}^4$ or a quotient of $\mathbb{S}^3 \times \R $. This gives a clean classification result   removing the earlier additional assumptions in \cite{NW08} by  Wallach and the second author.
\end{abstract}

\section{Introduction}
A gradient shrinking Ricci soliton is a triple $(M,g,f)$, a complete Riemannian manifold with a smooth potential function $f$ whose Hessian satisfying
\begin{equation}\label{eq:soliton1}
  \Ric +\nabla \nabla f =\frac 1 2 g.
\end{equation}
 The gradient shrinking soliton arises naturally in the study of the singularity analysis of the Ricci flow \cite{hamilton-sin, Perelman}. It also attracts the study since (\ref{eq:soliton1}) generalizes the Einstein metrics.  The main purpose of this article concerns a classification of such four-manifolds with positive isotropic curvature.
The positive isotropic curvature condition was first introduced by Micalleff and Moore \cite{MM88} in applying the index computation of  harmonic spheres to the study of the topology of manifolds. The Ricci flow on four-manifolds with positive isotropic curvature was studied by Hamilton \cite{Ham97}. This condition was proven to be invariant under Ricci flow in dimension four by Hamilton and in high dimensions by Brendle-Schoen \cite{BS09} and Nguyen \cite{Ng10}. It is hence then interesting to understand the solitons under the positive isotropic curvature condition. In \cite{NW08}, Wallach and the second author classified the four-dimensional gradient shrinking solitons under some extra assumptions, including the nonnegative curvature operator, a pinching condition and a curvature growth condition. Since then, there have been much progresses in understanding the general shrinking solitons \cite{MS13, MW15b, KW, Naber, CZ10} and particularly the four-dimensional ones \cite{MW15a}. In particular, a classification result was obtained in \cite{MW15b} for solitons with nonnegative curvature operator for all dimensions. The purpose of this article is to prove the following classification result on shrinking solitons with positive isotropic curvature by removing all the additional assumptions in \cite{NW08}.

\begin{thm}\label{Main Thm}
Any four dimensional complete gradient shrinking Ricci soliton with positive isotropic curvature
is either a quotient of $\mathbb{S}^4$ or a quotient of $\mathbb{S}^3 \times \R $.
\end{thm}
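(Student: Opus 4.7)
The plan is to extend the classification of \cite{NW08} by showing that, for a four-dimensional complete gradient shrinking Ricci soliton, the PIC hypothesis alone implies all three of their extra assumptions (nonnegativity of the curvature operator, a pointwise pinching estimate, and a curvature growth bound). I would first observe that PIC in dimension four implies positive Ricci curvature, so by the Cao-Zhou potential estimates we have $f(x) \sim \tfrac{1}{4}d(x,o)^2$ and polynomial volume growth. Combined with the soliton identity $R + |\nabla f|^2 = f$ and the four-dimensional shrinking soliton curvature estimates of Munteanu-Wang \cite{MW15a}, this should yield a pointwise bound on $|\Rm|$, dispensing with the curvature growth hypothesis.

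Next I would pass to the self-similar Ricci flow $g(t) = (1-t)\phi_t^* g$, $t<1$, along which PIC is preserved by Hamilton's theorem, and apply Hamilton's strong maximum principle to the evolution of the curvature operator inside the PIC cone. In dimension four the invariant subcones of PIC under Hamilton's ODE $\frac{d}{dt}\Rm = \Rm^2 + \Rm^{\#}$ are well understood from \cite{Ham97}, and the strong-maximum-principle dichotomy should yield one of two conclusions: either the curvature operator becomes pointwise nonnegative and suitably pinched (reducing us to the hypotheses of \cite{NW08}), or else a proper invariant subcone is preserved, forcing a parallel distribution and hence a local isometric splitting via the de Rham theorem.

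In the splitting case, the soliton factors (after passing to a cover if necessary) as $N^3 \times \mathbb{R}$, where $N^3$ is a three-dimensional shrinking soliton with positive Ricci curvature inherited from PIC; by the classification of three-dimensional shrinking solitons, $N^3$ is a quotient of $\mathbb{S}^3$, giving the second alternative. In the non-splitting case the soliton has nonnegative curvature operator with the required pinching, and then \cite{NW08} (or alternatively \cite{MW15b}) implies that it is Einstein with constant positive sectional curvature, giving a quotient of $\mathbb{S}^4$.

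The principal obstacle lies in the strong-maximum-principle step: promoting PIC to nonnegative curvature operator without any a priori pinching hypothesis requires a fairly complete enumeration of the invariant subcones of the PIC cone under Hamilton's ODE, and one must then use the self-similarity constraint $\Ric + \nabla\nabla f = \tfrac{1}{2}g$ to rule out intermediate degenerations that are permitted at the ODE level alone. The dimension-four decomposition $\Lambda^2 = \Lambda^2_+ \oplus \Lambda^2_-$ makes this delicate, since $W^+$ and $W^-$ can in principle degenerate independently, and isolating which algebraic possibilities are actually compatible with the soliton equation is where the bulk of the work will lie.
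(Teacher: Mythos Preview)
Your proposal has a genuine gap at the central step, and it differs substantially from the paper's route.

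First, a small correction: PIC in dimension four does \emph{not} imply positive Ricci curvature; the very space $\mathbb{S}^3\times\mathbb{R}$ in the conclusion has a null Ricci direction. This does not actually affect your use of Cao--Zhou, which holds for all complete shrinkers without any curvature sign, but it is a false statement as written.

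The real obstacle is your strong-maximum-principle step. You want to promote PIC to nonnegative curvature operator (or else force a splitting), but the PIC cone strictly contains curvature operators that are not nonnegative, and on a self-similar solution the curvature does not improve along the flow: it is merely pulled back by diffeomorphisms and rescaled. The strong maximum principle for invariant convex cones yields information only when the curvature \emph{touches the boundary} of such a cone; it cannot move you from the interior of one invariant cone (PIC) into a strictly smaller one (nonnegative curvature operator). Hamilton's work in \cite{Ham97} gives pinching estimates along genuine Ricci flow on compact manifolds, not an enumeration of invariant subcones of PIC suitable for a dichotomy on a soliton. So the ``enumeration of invariant subcones'' you invoke does not exist in the form you need, and the soliton equation $\Ric+\nabla\nabla f=\tfrac12 g$ by itself is not strong enough to exclude PIC solitons with a negative curvature-operator eigenvalue; indeed, ruling this out is essentially the content of the theorem.

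The paper proceeds completely differently and never promotes PIC to nonnegative curvature operator. It (i) sharpens the integration-by-parts argument of \cite{NW08} by replacing $\frac{B_3^4}{(A_1+A_2)^2(C_1+C_2)^2}$ with its square root and using only the Munteanu--Sesum integral bound $\int_M |\Ric|^2 e^{-\lambda f}<\infty$ in place of any pointwise growth hypothesis, thereby proving $BB^t=b^2\id$; (ii) shows $P\le 0$ by a Lagrange-multiplier optimization directly under the PIC constraints $A_i+A_j>0$, $C_i+C_j>0$, which are strictly weaker than nonnegativity of $A$ and $C$; and (iii) applies the same square-root and Kato-inequality trick to $|R_{ijkl}|/S$ to close the argument. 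Thus all three extra hypotheses of \cite{NW08} are removed by integral and algebraic means, not by a geometric upgrade of the curvature condition.
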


It remains an interesting question whether or not the same result holds in high dimensions. We plan to return to this in the future.

\section{Preliminaries}
In this section, we collect some results on gradient shrinking Ricci solitons that will be used in this paper.

After normalizing the potential function $f$ via translating, we have the following identities \cite{hamilton-sin}:
\begin{lemma}\label{basic equ}
\begin{equation*}
  S+\Delta f =\frac{n}{2},
\end{equation*}
\begin{equation*}
  S+|\nabla f|^2 =f,
\end{equation*}
where $S$ denotes the scalar curvature of $M$.
\end{lemma}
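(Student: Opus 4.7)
The plan is to derive both identities directly from \eqref{eq:soliton1} by taking traces and divergences, and then to fix the additive constant using the normalization of $f$ mentioned before the statement. The first identity follows at once by tracing $\Ric + \nabla\nabla f = \frac{1}{2} g$ against $g$, which gives $S + \Delta f = n/2$.

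For the second identity, my plan is to first establish the auxiliary identity $\Ric_{jk}\nabla^k f = \frac{1}{2}\nabla_j S$, and then contract \eqref{eq:soliton1} with $\nabla f$. To obtain the auxiliary identity I take the divergence of \eqref{eq:soliton1}. The contracted second Bianchi identity gives $\nabla^i \Ric_{ij} = \frac{1}{2}\nabla_j S$; for the Hessian term, the Bochner-type formula $\nabla^i \nabla_i \nabla_j f = \nabla_j \Delta f + \Ric_{jk}\nabla^k f$ converts the double divergence into a gradient of $\Delta f$ plus a Ricci-curvature term. Using the already-proven trace identity to replace $\nabla_j \Delta f$ by $-\nabla_j S$, and observing that $\nabla^i(\frac{1}{2}g_{ij}) = 0$, the auxiliary identity drops out.

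Contracting the soliton equation with $\nabla^j f$ and using both $(\nabla\nabla f)(\nabla f,\cdot) = \frac{1}{2}\nabla |\nabla f|^2$ and the auxiliary identity then yields $\nabla\bigl(S + |\nabla f|^2 - f\bigr) = 0$, so $S + |\nabla f|^2 - f$ is constant on $M$. The normalization $f \mapsto f + c$ for the appropriate constant $c$ makes this constant zero, which completes the proof. There is no genuine obstacle; the only step that demands a little care is the commutator calculation behind the Bochner-type formula for $\nabla^i \nabla_i \nabla_j f$, but this is entirely standard.
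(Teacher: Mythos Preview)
Your derivation is correct and is exactly the standard argument: trace \eqref{eq:soliton1} for the first identity, then take the divergence (using the contracted second Bianchi identity and the Ricci commutator formula) to obtain $\Ric(\nabla f,\cdot)=\tfrac{1}{2}\nabla S$, and finally contract \eqref{eq:soliton1} with $\nabla f$ to conclude that $S+|\nabla f|^2-f$ is constant, normalized to zero by translating $f$.

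The paper itself does not supply a proof of this lemma; it simply records the identities and cites \cite{hamilton-sin}. So there is nothing to compare beyond noting that your argument is the usual one found in the literature and fills in what the paper leaves as a citation.
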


Regarding the growth of the potential function $f$ and the volume of geodesic balls, Cao and Zhou \cite{CZ10} showed that
\begin{lemma}\label{potential func}
Let $(M^n, g)$ be a complete gradient shrinking Ricci soliton of dimension $n$ and $p \in M$. Then there are positive constants $c_1, c_2$ and $C$ such that
\begin{equation*}
  \frac{1}{4} \left(d(x,p)-c_1\right)^2_{+} \leq f(x) \leq  \frac{1}{4} \left(d(x,p)+c_2\right)^2,
\end{equation*}
\begin{equation*}
  \Vol(B_p(r)) \leq Cr^n.
\end{equation*}
\end{lemma}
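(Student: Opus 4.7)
The proof follows Cao--Zhou's strategy, based on the identity $|\nabla f|^2+S=f$ from Lemma \ref{basic equ} and the fact that $S\geq 0$ on a complete gradient shrinker (an argument of B.-L. Chen using the evolution $\Delta S+2|\Ric|^2=S-2\nabla f\cdot\nabla S$ together with the maximum principle at infinity). After the standard normalization $f\geq 0$ with $\min f=f(p_0)$, one first establishes the upper bound by setting $u:=2\sqrt{f}$. Away from $\{f=0\}$ one has $|\nabla u|^2=|\nabla f|^2/f=(f-S)/f\leq 1$, so $u$ is $1$--Lipschitz. Integrating $|\nabla u|\leq 1$ along a minimizing geodesic from $p_0$ to $x$ yields $2\sqrt{f(x)}\leq 2\sqrt{f(p_0)}+d(x,p_0)$, and comparing $d(x,p)$ with $d(x,p_0)$ using the triangle inequality absorbs the constants into a single $c_2$.

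The lower bound is the main technical step. The plan is to take a unit-speed minimizing geodesic $\gamma:[0,s_0]\to M$ emanating from $p_0$ and reaching $x$, and to use the soliton equation $\Ric(\dot\gamma,\dot\gamma)=\tfrac12-\H(f)(\dot\gamma,\dot\gamma)$ together with the identity $\H(f)(\dot\gamma,\dot\gamma)=(f\circ\gamma)''$. Integrating over $[0,s_0]$ gives
\begin{equation*}
\int_0^{s_0}\Ric(\dot\gamma,\dot\gamma)\,ds=\frac{s_0}{2}-\langle\nabla f,\dot\gamma\rangle(s_0)+\langle\nabla f,\dot\gamma\rangle(0).
\end{equation*}
The second variation inequality applied with a parallel orthonormal frame along $\gamma$ and cut-off test variations that vanish at the endpoints (e.g., linear on $[0,1]$ and $[s_0-1,s_0]$, equal to $1$ in between) forces $\int_0^{s_0}\Ric(\dot\gamma,\dot\gamma)\,ds\leq 2(n-1)+C$ for some universal constant once $s_0\geq 2$. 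Combining the two displays gives a linear upper bound $\langle\nabla f,\dot\gamma\rangle(s_0)\geq s_0/2-C'$. Finally, one integrates this in $s_0$ to conclude $f(\gamma(s))\geq s^2/4-c_1 s$ for $s$ large, which after completing the square yields $f(x)\geq \tfrac14(d(x,p_0)-c_1)_+^2$, and again the triangle inequality lets us replace $p_0$ by an arbitrary reference point $p$.

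For the volume bound, the plan is to use the quadratic lower bound on $f$ to control geodesic balls via the soliton equation. Integrating $\Delta f=n/2-S\leq n/2$ over $B_p(r)$ and applying the divergence theorem gives
\begin{equation*}
\int_{\partial B_p(r)}\langle\nabla f,\nu\rangle\,dA\leq \frac{n}{2}\Vol(B_p(r)).
\end{equation*}
On the other hand, $|\nabla f|\leq \sqrt{f}$ combined with the already-established upper bound of $f$ shows $|\nabla f|\leq \tfrac12 d(\cdot,p)+C$. A direct ODE argument for $V(r):=\Vol(B_p(r))$, exploiting the fact that $\int_{B_p(r)}\Delta f=\tfrac{n}{2}V(r)-\int_{B_p(r)}S$ and that $\int|\nabla f|$ on the sphere is controlled by $(r+C)\cdot \tfrac{d}{dr}V(r)$, yields a differential inequality of the form $rV'(r)\geq \tfrac{n}{2}V(r)-C V(r)$, equivalently $V(r)\leq Cr^n$ for large $r$. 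The main obstacle in the proof is the quadratic lower bound on $f$, where one has to choose cut-off variations carefully so that the second variation bound is insensitive to the large-scale geometry.
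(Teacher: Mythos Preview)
The paper does not prove this lemma; it simply quotes the result from Cao--Zhou \cite{CZ10}. Your proposal is essentially a sketch of their argument, and the two bounds on $f$ are outlined correctly.

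The volume step, however, has a gap as written. First, the displayed differential inequality $rV'(r)\geq \tfrac{n}{2}V(r)-CV(r)$ points the wrong way: it would yield a \emph{lower} bound $V(r)\geq c\,r^{n/2-C}$, not the claimed $V(r)\leq Cr^n$. Second, working directly with geodesic balls $B_p(r)$ is awkward because $\langle\nabla f,\nu\rangle$ need not have a sign on $\partial B_p(r)$, so the divergence identity only gives a one-sided inequality in the unhelpful direction. The clean argument in \cite{CZ10} uses the sublevel sets $D(r)=\{2\sqrt{f}\leq r\}$, on whose boundary $\nabla f$ is the outward normal with $|\nabla f|^2=r^2/4-S$. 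Combining the divergence theorem with the coarea formula there gives the identity
\[
nV(r)-2\chi(r)=rV'(r)-\tfrac{4}{r}\chi'(r),\qquad \chi(r)=\int_{D(r)}S,
\]
from which $(r^{-n}V(r))'$ is shown to be controlled, yielding $V(r)\leq Cr^n$. The quadratic bounds on $f$ then transfer this from $D(r)$ to $B_p(r)$.

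A smaller point on the lower bound for $f$: the second-variation inequality only bounds $\int_0^{s_0}\phi^2\,\Ric(\dot\gamma,\dot\gamma)\,ds$, not $\int_0^{s_0}\Ric(\dot\gamma,\dot\gamma)\,ds$. Near $\gamma(0)=p_0$ the discrepancy is harmless by compactness, but near the far endpoint $\gamma(s_0)=x$ one cannot invoke a uniform curvature bound; instead one substitutes $\Ric(\dot\gamma,\dot\gamma)=\tfrac12-(f\circ\gamma)''$ and integrates by parts against $\phi^2$ directly, which is how Cao--Zhou extract the inequality $\langle\nabla f,\dot\gamma\rangle(s_0)\geq s_0/2-C'$.
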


Munteanu and Sesum \cite{MS13} proved the following integral bound for the Ricci curvature.
\begin{lemma}\label{Ric int bound}
Let $(M, g)$ be a complete gradient shrinking Ricci soliton. Then for any $\l >0$, we have
$$\int_{M} |\Ric|^2 e^{-\l f} < \infty.$$
\end{lemma}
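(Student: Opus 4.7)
The plan is to derive and exploit the identity
\begin{equation*}
2|\Ric|^2 \;=\; S - \Delta S + \langle \nabla f, \nabla S\rangle,
\end{equation*}
equivalently $2|\Ric|^2 = S - \Delta_f S$ with drift Laplacian $\Delta_f := \Delta - \langle \nabla f, \nabla\cdot\rangle$. Once this identity is in hand, I would integrate it against $\phi_R^2 e^{-\lambda f}$ for a spatial cutoff $\phi_R$, integrate by parts, and absorb the resulting $|\Ric|^2$ terms via the standard soliton gradient identity $\nabla S = 2\Ric(\nabla f,\cdot)$.

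To establish the identity, squaring the soliton equation immediately gives $|\nabla\nabla f|^2 = \tfrac{n}{4} - S + |\Ric|^2$. On the other hand, the Bochner formula applied to $f$, together with $\Delta f = \tfrac{n}{2} - S$ and $|\nabla f|^2 = f - S$ from Lemma \ref{basic equ}, and the consequence $2\Ric(\nabla f, \nabla f) = \langle \nabla f, \nabla S\rangle$ of the contracted second Bianchi identity, produces another expression for $\tfrac{1}{2}\Delta |\nabla f|^2$. Equating the two expressions yields the identity above.

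Now fix a cutoff $\phi_R$ with $\phi_R \equiv 1$ on $B_p(R)$, supported in $B_p(2R)$, and $|\nabla\phi_R|\le C/R$. Multiplying the identity by $\phi_R^2 e^{-\lambda f}$ and integrating, the $\Delta S$ contribution is handled by integration by parts and produces a cross term $\lambda \int \phi_R^2 \langle \nabla S, \nabla f\rangle e^{-\lambda f}$ plus cutoff boundary terms; combined with the original drift piece $\int \phi_R^2 \langle \nabla f, \nabla S\rangle e^{-\lambda f}$, this leaves a residual cross term of the form $(\lambda-1) \int \phi_R^2 \langle \nabla S, \nabla f\rangle e^{-\lambda f}$. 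The soliton identity $\nabla_i S = 2\Ric_{ij}\nabla^j f$, together with $|\nabla f|^2 \le f$, yields the key pointwise bound $|\nabla S| \le 2|\Ric|\sqrt{f}$. Applying Cauchy--Schwarz with a small parameter $\varepsilon>0$, each $\nabla S$ integral is dominated by $\varepsilon \int \phi_R^2 |\Ric|^2 e^{-\lambda f}$ plus harmless integrals of the form $\int (1 + f^2) e^{-\lambda f}$ and $\int f |\nabla\phi_R|^2 e^{-\lambda f}$.

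Choosing $\varepsilon$ small, the $|\Ric|^2$ contributions on the right-hand side can be absorbed into the left. Letting $R \to \infty$, the remaining integrals converge thanks to Lemma \ref{potential func}: the bounds $\tfrac{1}{4}(d(x,p)-c_1)_+^2 \le f(x) \le \tfrac{1}{4}(d(x,p)+c_2)^2$ and $\Vol(B_p(r)) \le C r^n$ imply $\int_M (1 + f^k) e^{-\lambda f}\,dV < \infty$ for every $k\ge 0$ and every $\lambda>0$ (and since $S\ge 0$ with $S\le f$, the $S$-integral is dominated likewise). The main technical obstacle is precisely the $\lambda \ne 1$ case: the weight $e^{-\lambda f}$ does not perfectly cancel the drift $\nabla f$ in integration by parts, so the residual $(\lambda-1)\int \phi_R^2\langle \nabla S, \nabla f\rangle e^{-\lambda f}$ must be tamed through the pointwise gradient bound for $S$ before the absorption argument can close.
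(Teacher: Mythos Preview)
Your argument is correct. The paper itself does not prove this lemma; it simply cites Munteanu--Sesum \cite{MS13}, so there is no in-paper proof to compare against. Your route---the drift identity $\Delta_f S = S - 2|\Ric|^2$, integration against a compactly supported cutoff times $e^{-\lambda f}$, the pointwise bound $|\nabla S|\le 2|\Ric|\,|\nabla f|$ coming from $\nabla_i S = 2R_{ij}\nabla_j f$, and absorption via Cauchy--Schwarz---is essentially the standard argument and goes through as you outline. Two small remarks: you are tacitly using $S\ge 0$ (so that $|\nabla f|^2\le f$), which is Chen's lower bound for complete ancient/shrinking solutions and is independent input; and the cutoff error $\int f|\nabla\phi_R|^2 e^{-\lambda f}$ is indeed uniformly bounded (in fact $\to 0$) because on the annulus $B_{2R}\setminus B_R$ one has $f\,|\nabla\phi_R|^2\le C$ while $e^{-\lambda f}\le e^{-c\lambda R^2}$ and $\Vol(B_{2R})\le CR^n$.
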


The above mentioned results are valid in all dimensions. In the following, we recall some special properties in four dimension.
It is well known that, in dimension four, the curvature operator $R$ can be written as
$$R =\begin{pmatrix}
A & B \\
B^t & C
\end{pmatrix}$$
according to the natural splitting $\wedge^2(\R^4) =\wedge_{+} \oplus \wedge_{-}$,
where $\wedge_{+}$ and $\wedge_{-}$ are the self-dual and anti-self-dual parts respectively.
It is easy to see that $A$ and $C$ are symmetric.
Denote $A_1\leq A_2 \leq A_3$ and $C_1\leq C_2 \leq C_3$ the eigenvalues of $A$ and $C$, respectively.
Also let $B_1 \leq B_2 \leq B_3$ be the singular eigenvalues of $B$.
Note that a direct consequence of the first Bianchi identity is that $\tr(A) =\tr(C) =\frac{S}{4}$, where $S$ is the scalar curvature.
In \cite{NW08}, Wallach and the second author computed that $\overset{\circ}{\Ric}$ can be expressed in terms of components of $B$,
where $\overset{\circ}{\Ric}$ is the traceless part of the Ricci tensor.
In particular, $4\|B\|^2 =|\overset{\circ}{\Ric}|^2$ and $\sum_{1}^{4} \l_i^3 =24 \det B$, where $\l_i$'s are the eigenvalues of $\overset{\circ}{\Ric}$.
It was also observed in \cite[Proposition 3.1]{NW08} that these components of the curvature operator satisfy certain differential inequalities,
which play a significant role in the classification results.
\begin{proposition}\label{diff inequ}
If $(M, g(t))$ is a solution to the Ricci flow, then we have the following differential inequalities
\begin{eqnarray*}
  \left(\frac{\partial}{\partial t} - \Delta\right) (A_1+A_2) &\geq & A_1^2 +A_2^2 +2(A_1+A_2)A_3 +B_1^2 +B_2^2, \\
  \left(\frac{\partial}{\partial t} - \Delta\right) (C_1+C_2) &\geq & C_1^2 +C_2^2 +2(C_1+C_2)C_3 +B_1^2 +B_2^2,\\
  \left(\frac{\partial}{\partial t} - \Delta\right) B_3 &\leq & A_3B_3 +C_3B_3 +2B_1B_2
\end{eqnarray*}
in the distributional sense.
\end{proposition}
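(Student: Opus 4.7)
The plan is to derive the three inequalities from Hamilton's evolution equation $\heat R = R^2 + R^\#$ for the curvature operator under Ricci flow, specialized to dimension four via the block decomposition induced by $\wedge^2(\R^4) = \wedge_+ \oplus \wedge_-$. A direct computation of $R^2 + R^\#$ in this block form, using the Lie algebra identification $\so(4) = \so(3)_+ \oplus \so(3)_-$ (so that $[\wedge_+, \wedge_-]=0$ and the $\#$ operation respects the splitting), produces the matrix-valued system
\[
\heat A = A^2 + BB^t + 2A^\#, \qquad \heat C = C^2 + B^tB + 2C^\#, \qquad \heat B = AB + BC + 2B^\#,
\]
where $A^\#$ and $C^\#$ are the adjugates of the symmetric $3\times 3$ operators $A$ and $C$ (so in an eigenbasis of $A$, $A^\# = \mathrm{diag}(A_2A_3, A_1A_3, A_1A_2)$), and $B^\#$ is the analogous cofactor-type operator for $B$, read off from its singular value decomposition.

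For the first inequality I would combine this with the variational characterization
\[
A_1 + A_2 = \inf\bigl\{\tr(A|_V) : V \subset \wedge_+,\ \dim V = 2\bigr\}
\]
and the standard Hamilton-type barrier argument. At a fixed spacetime point $(p_0, t_0)$, pick orthonormal eigenvectors $e_1, e_2$ of $A(p_0, t_0)$ for the eigenvalues $A_1, A_2$, and extend them to a smooth local section of $\wedge_+$ with vanishing first covariant derivatives at $p_0$ (e.g.\ via radial parallel transport). The function $\phi := \langle A e_1, e_1\rangle + \langle A e_2, e_2\rangle$ is then smooth, bounds $A_1 + A_2$ from above, and agrees with it at $(p_0, t_0)$, so it serves as an upper barrier. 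Evaluating $\heat \phi$ at $(p_0, t_0)$ using the evolution equation for $A$ gives
\[
\heat \phi = A_1^2 + A_2^2 + (BB^t)_{11} + (BB^t)_{22} + 2\bigl(A^\#_{11} + A^\#_{22}\bigr).
\]
The adjugate term equals $2A_3(A_1 + A_2)$ by the formula above, while $(BB^t)_{11} + (BB^t)_{22} \geq B_1^2 + B_2^2$, since the sum of any two diagonal entries of a positive semidefinite symmetric operator in an orthonormal basis dominates the sum of its two smallest eigenvalues. This yields the first inequality in the distributional sense, and the second follows by the $A \leftrightarrow C$ symmetry.

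For the third inequality I would use the dual description $B_3 = \sup\{\langle Bu, v\rangle : u \in \wedge_+,\ v \in \wedge_-,\ |u|=|v|=1\}$. Take unit vectors $u_0, v_0$ at $(p_0, t_0)$ realizing the supremum, so that $B u_0 = B_3 v_0$ and $B^t v_0 = B_3 u_0$, extend them locally with vanishing first covariant derivatives at $p_0$, and form $\psi := \langle Bu, v\rangle$, which is smooth, bounds $B_3$ from below, and agrees with it at $(p_0, t_0)$. The evolution equation for $B$ then gives, at that point,
\[
\heat \psi = B_3\langle Av_0, v_0\rangle + B_3\langle Cu_0, u_0\rangle + 2\langle B^\# u_0, v_0\rangle \leq A_3 B_3 + C_3 B_3 + 2\langle B^\# u_0, v_0\rangle,
\]
using Rayleigh quotient bounds for the first two terms. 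Reading $B^\#$ off the singular value decomposition of $B$ with compatible orientations on $\wedge_+$ and $\wedge_-$, the remaining term evaluates to $\langle B^\# u_0, v_0\rangle = \sigma_1 \sigma_2 \leq B_1 B_2$, where $\sigma_1, \sigma_2$ are the signed singular values of $B$ with $|\sigma_i| = B_i$. The main obstacle is the bookkeeping: carrying out the explicit block computation of $R^2 + R^\#$ and tracking the sign conventions for the cofactor term $B^\#$ so that the final bound is the sharp one $2 B_1 B_2$ rather than the larger $2 B_2 B_3$. The non-smoothness of the eigenvalues in $(p,t)$ is handled uniformly by the barrier procedure described above, which is exactly what delivers the inequalities in the distributional sense.
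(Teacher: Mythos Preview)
The paper does not prove this proposition; it quotes it from \cite{NW08}, whose argument in turn follows Hamilton \cite{Ham97}. Your approach---deriving the block system $\heat A=A^{2}+BB^{t}+2A^{\#}$, $\heat B=AB+BC+2B^{\#}$, $\heat C=C^{2}+B^{t}B+2C^{\#}$ in the Uhlenbeck gauge and then running the eigenvalue barrier argument, with the estimates $(BB^{t})_{11}+(BB^{t})_{22}\ge B_{1}^{2}+B_{2}^{2}$ and $\langle B^{\#}u_{0},v_{0}\rangle\le B_{1}B_{2}$ via signed singular values---is exactly that standard derivation and is correct (modulo a harmless swap of $\wedge_{+}$ and $\wedge_{-}$ in your description of where $u_{0}$ and $v_{0}$ live, given the paper's convention that $B:\wedge_{-}\to\wedge_{+}$).
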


We now summarize the ideas  used to prove the classification result in \cite{NW08}
and explain our strategy of removing all the assumptions except positive isotropic curvature.
First of all, it was shown in \cite[Proposition 4.2]{NW08a} that if $S\neq 0$, then
\begin{equation*}
  \left(\frac{\partial }{\partial t} -\Delta \right) \left(\frac{|R_{ijkl}|^2}{S^2} \right)= \frac{4P}{S^3} -\frac{2}{S^4}|S\nabla_pR_{ijkl}-\nabla_pS R_{ijkl}|^2
  +\left\langle \nabla \left(\frac{|R_{ijkl}|^2}{S^2} \right), \nabla \log{S^2} \right\rangle,
\end{equation*}
where $P$ is defined by
\begin{equation*}
  P=4 S \langle R^2 +R^{\sharp}, R\rangle  -|\Ric|^2 |R_{ijkl}|^2.
\end{equation*}
Here $\#$ is the operator defined using the Lie algebraic structure of $\wedge^2 T_pM$ (see for example \cite{hamilton-sin}).
Under a certain curvature growth assumption, the classification result follows from the proof of the main theorem in \cite{NW08a} if one can show that $P \leq 0 $.
In dimension four, $P$ can be expressed in terms of $A, B$ and $C$.
Let $\overset{\circ}{A}$ and $\overset{\circ}{C}$ be the traceless parts of $A$ and $C$, respectively.
By choosing suitable basis of $\wedge_{+}$ and $\wedge_{-}$, we may diagonalize $\overset{\circ}{A}$  and $\overset{\circ}{C}$ such that
\begin{equation*}
  A=\begin{pmatrix}
    \frac{S}{12}+a_1 & 0 & 0 \\
    0 & \frac{S}{12}+a_2 & 0 \\
    0 & 0 & \frac{S}{12}+a_3
  \end{pmatrix}, \ \
  C=\begin{pmatrix}
    \frac{S}{12}+c_1 & 0 & 0 \\
    0 & \frac{S}{12}+c_2 & 0 \\
    0 & 0 & \frac{S}{12}+c_3
  \end{pmatrix}.
\end{equation*}
 Then $P$ can be written as (See \cite{NW08})
\begin{eqnarray*}
  P &=& -S^2\left(\frac 1 6 \sum_{1}^{4}\l_i^2 +\sum_{1}^{3}a_i^2 +\sum_{1}^{3} c_i^2 \right)
  \\&\,&+4S \left(\sum_{1}^{3}(a_i^3+c_i^3) +6a_1a_2a_3 +6c_1c_2c_3 -\frac 1 2 \sum_{1}^{4} \l_i^3\right)  \\
   &\,& +12S\left(a_1b_1^2 +a_2b_2^2 +a_3b_3^2 +c_1 \tilde{b}_1^2  +c_2 \tilde{b}_2^2+c_3 \tilde{b}_3^2 \right) \\
   &\,& - 2\left(\sum_{1}^{4}\l_i^2\right)^2 -4 \left(\sum_{1}^{4}\l_i^2 \right) \left(\sum_{1}^{3}(a_i^2+c_i^2)\right) ,
\end{eqnarray*}
where $\sum_{1}^{3}a_i =\sum_{1}^{3}c_i =\sum_{1}^{4}\l_i =0$, $b_i^2 =\sum_{j=1}^{3}B_{ij}^2$ and $\tilde{b}_i^2 =\sum_{j=1}^{3}B_{ji}^2$.

A key observation in \cite{NW08} is that for several basic examples, one always has $BB^t =b^2 \id $ and $P=0$.
By considering the evolution equation of the quantity $\frac{B_3^4}{(A_1+A_2)^2(C_1+C_2)^2}$ and applying integration by parts,
Wallach and the second author proved that $BB^t =b^2 \id $ is indeed valid for all four-dimensional shrinking Ricci solitons satisfying two very weak curvature assumptions:
\begin{equation}\label{eq:ass1}
  \frac{B_3^2}{(A_1+A_2)(C_1+C_2)} (x) \leq \exp(a (r(x) +1)),
\end{equation}
\begin{equation}\label{eq:ass2}
 | R_{ijkl}|(x)\leq \exp(a (r(x) +1))
\end{equation}
for some $a>0$, where $r(x)$ is the distance function to a fixed point on the manifold.
These two assumptions are only needed to ensure that all the integrals involved in the integration by parts argument are finite.
To remove these two curvature assumptions, we show that $BB^t =b^2 \id $ in fact holds on all four-dimensional shrinking Ricci solitons.
Our strategy is to consider the evolution equation of $\frac{B_3}{\sqrt{(A_1+A_2)(C_1+C_2)}}$ and sharpen the integration by parts argument in \cite{NW08}. 
In the proof, in order to guarantee that all the integrals involved are finite, we use the  integral bound of the Ricci curvature in Lemma \ref{Ric int bound}.

Now under the assumption $BB^t =b^2 \id$, $P$ has a much simpler expression:
\begin{eqnarray*}
  P &=& -S^2\left(\sum_{1}^{3}a_i^2 +\sum_{1}^{3} c_i^2 \right) +12S\left(\sum_{1}^{3}(a_i^3+c_i^3)\right) \\
     && -2b^2\left(S+12b\right)^2 - 48b^2 \left(\sum_{1}^{3}a_i^2 +\sum_{1}^{3} c_i^2 \right)  \\
   &\leq & -S\left(S\sum_{1}^{3}a_i^2- 12\sum_{1}^{3}a_i^3 \right) -S\left(S\sum_{1}^{3}c_i^2- 12\sum_{1}^{3}c_i^3 \right),
\end{eqnarray*}
where we have used the following
\begin{eqnarray*}
&& 3b^2 =\sum_{1}^{3}b_i^2 =\sum_{1}^{3}\tilde{b}_i^2 =\frac 1 4\sum_{1}^{4}\l_i^2,\\
&& \sum_{1}^{3}a_i^3 +6a_1a_2a_3 =3 \sum_{1}^{3}a_i^3,\\
&& \sum_{1}^{4} \l_i^3 =24 \det B =24 b^3.
\end{eqnarray*}
Then by further assuming that $M$ has nonnegative curvature operator, namely, $A$ and $C$ are positive semidefinite, $P\leq 0$ was proved in \cite{NW08} by solving two optimization problems with constraints.
More precisely, they showed that under the constraints $\sum_{1}^{3}a_i = 0$ and $\frac{S}{12} +a_i \geq 0$,
the maximum of $\sum_{1}^{3} a_i^2$ is $\frac{S^2}{24}$, while under further normalization $\sum_{1}^{3}a_i^2 =1$ it holds that
\begin{equation*}
  \frac{\sum_{1}^{3}a_i^3}{\sum_{1}^{3}a_i^2} \leq \frac{1}{\sqrt{6}} \left(\sum_{1}^{3}a_i^2\right)^{\frac 1 2}.
\end{equation*}
Combining these results, one easily obtains
\begin{equation*}
  S\sum_{1}^{3}a_i^2- 12\sum_{1}^{3}a_i^3 \geq 0.
\end{equation*}
The terms with $c_i$' s can be handled similarly and thus they arrived at that $P\leq 0$.

By solving another optimization problem under weaker constraints,
we achieve in this paper that $P\leq 0$ without assuming $M$ has nonnegative curvature operator. In the last step  we also sharpen the integration by parts argument in \cite{NW08a} instead of  appealing the result in \cite{NW08a} as in \cite{NW08} since we no longer assume a curvature growth condition (\ref{eq:ass2}). These three improvements allow us to prove the main result without any additional assumptions.

\section{Proof of Theorem \ref{Main Thm} }
For brevity, we introduce the same notations as in \cite{NW08}: $\psi_1=A_1+A_2$, $\psi_2=C_1+C_2$, $\vp=B_3$ and
\begin{eqnarray*}
  -E &=& -\frac{4B_1(B_3-B_2)}{B_3}-\frac{(A_1-B_1)^2+(A_2-B_2)^2+2A_2(B_2-B_1)}{A_1+A_2} \\
   && -\frac{(C_1-B_1)^2+(C_2-B_2)^2+2C_2(B_2-B_1)}{C_1+C_2}.
\end{eqnarray*}
It is clear that $-E \leq 0$ with equality holds only if $A_1 =C_1 =B_1 =B_2 =A_2 =C_2 =B_3$.
In particular we have $BB^t =b^2 \id$ for some $b$.
Also notice that $M$ has positive isotropic curvature amounts to $\psi_1 >0$ and $\psi_2 >0$.

\begin{proposition}
The following differential inequality holds in the sense of distribution:
\begin{eqnarray}\label{main diff inequ}
\left(\frac{\partial}{\partial t} - \Delta \right)\frac{\vp}{\sqrt{\psi_1 \psi_2}}
&\leq&  -\frac{1}{2}\frac{\vp}{\sqrt{\psi_1 \psi_2}} E
-\frac{1}{4} \frac{\vp |\psi_1 \nabla \psi_2 -\psi_2 \nabla \psi_1 |^2}{{(\psi_1 \psi_2)}^{\frac{5}{2}}} \\
&& +\left\langle \nabla \left( \frac{\vp}{\sqrt{\psi_1 \psi_2}}\right), \nabla \log(\psi_1 \psi_2) \right\rangle. \nonumber
\end{eqnarray}
\end{proposition}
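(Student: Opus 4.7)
The plan is to derive~\eqref{main diff inequ} from the three distributional inequalities of Proposition~\ref{diff inequ} by computing the heat operator on $F := \vp/\sqrt{\psi_1\psi_2}$ via logarithmic differentiation. Set $\log F = \log\vp - \tfrac{1}{2}\log\psi_1 - \tfrac{1}{2}\log\psi_2$. Combining
\[
\heat F = F\,\bigl[\heat \log F - |\nabla\log F|^2\bigr]
\]
with $\heat \log G = \heat G /G + |\nabla G|^2/G^2$ for each of $G \in \{\vp,\psi_1,\psi_2\}$, one finds that the $|\nabla\vp|^2/\vp^2$ contributions cancel, leaving
\[
\frac{\heat F}{F} = \frac{\heat \vp}{\vp} - \frac{1}{2}\frac{\heat \psi_1}{\psi_1} - \frac{1}{2}\frac{\heat \psi_2}{\psi_2} + \mathcal{G},
\]
where $\mathcal{G}$ collects the remaining quadratic gradient terms in $\nabla\vp/\vp$, $\nabla\psi_1/\psi_1$, $\nabla\psi_2/\psi_2$.

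Next, I substitute the three inequalities of Proposition~\ref{diff inequ}: divide $\heat\vp \leq A_3 B_3 + C_3 B_3 + 2B_1B_2$ by $\vp = B_3$, and use the corresponding lower bounds for $\heat\psi_i$. The $A_3$ and $C_3$ contributions cancel exactly across the three quotients, leaving the linear-in-curvature residue
\[
\frac{2 B_1 B_2}{B_3} - \frac{A_1^2+A_2^2+B_1^2+B_2^2}{2(A_1+A_2)} - \frac{C_1^2+C_2^2+B_1^2+B_2^2}{2(C_1+C_2)}.
\]
A direct algebraic verification identifies this with $-E/2$: expand $(A_1-B_1)^2+(A_2-B_2)^2+2A_2(B_2-B_1)$ as $A_1^2+A_2^2+B_1^2+B_2^2 - 2B_1(A_1+A_2)$, and analogously in $C$; combine with $-\tfrac{2B_1(B_3-B_2)}{B_3} = -2B_1+\tfrac{2B_1B_2}{B_3}$; the excess linear terms $-2B_1 + B_1 + B_1 = 0$ cancel, reassembling exactly the displayed quantity.

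It remains to reorganize $\mathcal{G}$ into the two terms on the right-hand side of~\eqref{main diff inequ}. A short coefficient-matching calculation shows
\[
\mathcal{G} = \langle \nabla\log F,\nabla\log(\psi_1\psi_2)\rangle - \tfrac{1}{4}|\nabla\log\psi_1 - \nabla\log\psi_2|^2.
\]
Using $\langle \nabla\log F,\nabla\log(\psi_1\psi_2)\rangle = F^{-1}\langle\nabla F,\nabla\log(\psi_1\psi_2)\rangle$ and $|\nabla\log\psi_1 - \nabla\log\psi_2|^2 = |\psi_1\nabla\psi_2-\psi_2\nabla\psi_1|^2/(\psi_1\psi_2)^2$, multiplying our expression for $\heat F/F$ through by $F = \vp/\sqrt{\psi_1\psi_2}$ yields exactly the right-hand side of~\eqref{main diff inequ}. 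The only delicate point, rather than a genuine obstacle, is distributional rigor: since $\vp = B_3$, $\psi_1 = A_1+A_2$, and $\psi_2 = C_1+C_2$ fail to be smooth at points where the relevant eigenvalues of $A,B,C$ coalesce, every step above must be interpreted via smooth upper and lower support functions, exactly as in the proof of Proposition~\ref{diff inequ}; the ratios require $\psi_1,\psi_2>0$, so the inequality is naturally stated on the positive-isotropic-curvature region, which is where it will later be applied.
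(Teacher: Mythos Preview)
Your proof is correct and follows essentially the same route as the paper's. The paper writes down the quotient-rule identity for $\heat\bigl(\vp/\sqrt{\psi_1\psi_2}\bigr)$ directly and then says ``after some cancellations''; your logarithmic differentiation is just a convenient bookkeeping device for the same computation, and you have written out explicitly the algebraic verification (that the curvature residue equals $-E/2$ and that the gradient terms reorganize into $\langle\nabla\log F,\nabla\log(\psi_1\psi_2)\rangle-\tfrac14|\nabla\log\psi_1-\nabla\log\psi_2|^2$) which the paper omits. One small caveat: your log formulation tacitly assumes $\vp=B_3>0$, whereas the paper's direct quotient formula does not; your final remark about support functions covers this, but the paper's organization avoids the issue altogether.
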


\begin{proof}
Straightforward calculations yield
\begin{eqnarray*}
  \left(\frac{\partial}{\partial t} - \Delta \right)\frac{\vp}{\sqrt{\psi_1 \psi_2}}
   &=&  \frac{\left(\frac{\partial}{\partial t} - \Delta \right)\vp}{\sqrt{\psi_1 \psi_2}}
   -\frac{1}{2} \frac{\vp \psi_1 \left(\frac{\partial}{\partial t} - \Delta \right)\psi_2
   +\vp \psi_2 \left(\frac{\partial}{\partial t} - \Delta \right)\psi_1 }{{(\psi_1 \psi_2)}^{\frac{3}{2}} }  \\
   && 
   -\frac{1}{4} \frac{\vp |\psi_1 \nabla \psi_2 -\psi_2 \nabla \psi_1 |^2}{{(\psi_1 \psi_2)}^{\frac{5}{2}}}
   +\left\langle \nabla \left(\frac{\vp}{\sqrt{\psi_1 \psi_2}}\right), \nabla \log(\psi_1 \psi_2) \right\rangle
\end{eqnarray*}

Substituting the differential inequalities in Proposition \ref{diff inequ} into the above equation gives, after some cancelations, that
\begin{eqnarray*}
  \left(\frac{\partial}{\partial t} - \Delta \right)\frac{\vp}{\sqrt{\psi_1 \psi_2}}
   &\leq&  -\frac{1}{2}\frac{\vp}{\sqrt{\psi_1 \psi_2}} E
    -\frac{1}{4} \frac{\vp |\psi_1 \nabla \psi_2 -\psi_2 \nabla \psi_1 |^2}{{(\psi_1 \psi_2)}^{\frac{5}{2}}}\\
   && +\left\langle \nabla \left(\frac{\vp}{\sqrt{\psi_1 \psi_2}}\right), \nabla \log(\psi_1 \psi_2) \right\rangle.
\end{eqnarray*}
This proves the proposition.
\end{proof}

\begin{proposition}\label{BB^t}
Let $M$ be a four-dimensional gradient shrinking Ricci soliton with positive isotropic curvature. Then $BB^t =b^2 \id$.
\end{proposition}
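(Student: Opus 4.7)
The plan is to convert the parabolic inequality of the preceding proposition into a weighted elliptic inequality on the soliton, integrate against a cutoff with weight $\psi_1\psi_2 e^{-f}$, and use the Ricci integral bound to show that $u^2 E \equiv 0$, where $u := \varphi/\sqrt{\psi_1\psi_2}$. Since $u$ is scale-invariant, the self-similar structure of the Ricci flow induced by the soliton gives $(\partial_t - \Delta) u = -\Delta_f u$, where $\Delta_f := \Delta - \langle \nabla f, \nabla\cdot\rangle$ is the drift Laplacian; the inequality of the previous proposition then rewrites in divergence form as
\[\operatorname{div}\bigl(\psi_1\psi_2 e^{-f}\, \nabla u\bigr) \geq \psi_1\psi_2 e^{-f}\left(\tfrac{1}{2} u E + \tfrac{1}{4}\tfrac{\varphi|\psi_1\nabla\psi_2-\psi_2\nabla\psi_1|^2}{(\psi_1\psi_2)^{5/2}}\right),\]
with the weight $\psi_1\psi_2$ chosen precisely to absorb the mixed gradient term $\langle \nabla u, \nabla\log(\psi_1\psi_2)\rangle$ into the divergence.

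Next, I take a standard cutoff $\eta_R$ with $\eta_R \equiv 1$ on $B_p(R)$, $\operatorname{supp}\eta_R \subset B_p(2R)$, and $|\nabla \eta_R| \leq C/R$. Multiplying by $u\eta_R^2$ and integrating by parts, then applying Cauchy--Schwarz on the cross term $-2\int u\eta_R\langle\nabla u,\nabla\eta_R\rangle\psi_1\psi_2 e^{-f}$ to absorb one copy of $\int \eta_R^2 |\nabla u|^2 \psi_1\psi_2 e^{-f}$ into the left-hand side, I obtain
\[\int_M \eta_R^2 \left(\tfrac{1}{2} u^2 E + \tfrac{1}{2}|\nabla u|^2\right)\psi_1\psi_2 e^{-f}\, dV + (\text{other nonneg terms}) \leq \int_M u^2 |\nabla\eta_R|^2 \psi_1\psi_2 e^{-f}\, dV.\]
The crucial observation is that $u^2\psi_1\psi_2 = \varphi^2 = B_3^2$, and the algebraic identity $4\|B\|^2 = |\overset{\circ}{\Ric}|^2$ from the preliminaries gives $\varphi^2 \leq \tfrac14|\Ric|^2$; hence the right-hand side is bounded by $\tfrac{C^2}{R^2}\int_M|\Ric|^2 e^{-f}\,dV$, which is finite by Lemma~\ref{Ric int bound} and tends to $0$ as $R \to \infty$.

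In the limit $\int_M u^2 E\,\psi_1\psi_2 e^{-f}\, dV \leq 0$. Since $\psi_1,\psi_2 > 0$ by PIC and $E \geq 0$ pointwise (each of the three summands in $-E$ is individually nonpositive, using $A_2, C_2 > 0$ from PIC and $0 \leq B_1 \leq B_2 \leq B_3$), I conclude $u^2 E \equiv 0$. At a point where $\varphi > 0$, the equality characterization of $E = 0$ noted in the text forces $B_1 = B_2 = B_3$; at a point where $\varphi = B_3 = 0$, all singular values of $B$ vanish. In either case $BB^t = B_3^2\,\id$, proving the proposition with $b = B_3$. The main obstacle, and the point of sharpening \cite{NW08}, is ensuring integrability of the cutoff-error term; working with the first power of $u$ and the weight $\psi_1\psi_2 e^{-f}$ makes the error collapse to a constant multiple of $\int|\Ric|^2 e^{-f}$, which is controlled by Lemma~\ref{Ric int bound} alone, whereas the $u^4$-version in \cite{NW08} required the additional growth assumptions \eqref{eq:ass1}--\eqref{eq:ass2}.
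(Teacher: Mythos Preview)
Your proof is correct and follows essentially the same route as the paper: you rewrite the parabolic inequality on the soliton as an elliptic inequality for $u=\vp/\sqrt{\psi_1\psi_2}$, test against $u\eta^2$ with weight $\psi_1\psi_2 e^{-f}$ (equivalently the paper's $e^{-f+\log(\psi_1\psi_2)}$), integrate by parts, and bound the cutoff error by $\tfrac{C}{R^2}\int \vp^2 e^{-f}\le \tfrac{C}{R^2}\int |\Ric|^2 e^{-f}$ via Lemma~\ref{Ric int bound}. Your explicit divergence-form packaging and the remark on scale invariance are nice clarifications, but the argument is the same as the paper's.
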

\begin{proof}
  Note that on a gradient Ricci soliton, it holds that
  \begin{equation*}
    \frac{\partial }{\partial t} \left(\frac{\vp}{\sqrt{\psi_1 \psi_2}} \right)=\left\langle \nabla f, \nabla \left(\frac{\vp}{\sqrt{\psi_1 \psi_2}}\right) \right\rangle.
  \end{equation*}
  Fix a point $x_0$ in $M$. For any $r >0$, one can choose a smooth cut-off function $\eta$ with support in $\{x\in M : d(x, x_0)\leq r\}$ and $|\nabla \eta | \leq C/r$.
  Then multiplying both sides of \eqref{main diff inequ} by $ \frac{\vp}{\sqrt{\psi_1 \psi_2}} e^{-f +\log (\psi_1 \psi_2)} \eta^2 $ and integrating over $M$,
  \begin{eqnarray*}
    && \int_{M}\left(\left\langle \nabla f, \nabla \left(\frac{\vp}{\sqrt{\psi_1 \psi_2}}\right) \right\rangle - \Delta \left( \frac{\vp}{\sqrt{\psi_1 \psi_2}} \right) \right)\frac{\vp}{\sqrt{\psi_1 \psi_2}}
    e^{-f +\log (\psi_1 \psi_2) }\eta^2 \\
     &\leq & -\frac{1}{2} \int_{M} \vp^2  E e^{-f} \eta^2
     + \int_{M} \left\langle \nabla \left( \frac{\vp}{\sqrt{\psi_1 \psi_2}}\right), \nabla \log(\psi_1 \psi_2) \right\rangle \frac{\vp}{\sqrt{\psi_1 \psi_2}}  e^{-f +\log (\psi_1 \psi_2)} \eta^2
  \end{eqnarray*}

After integration by parts and some cancelations, we arrive at
\begin{eqnarray*}
   0 &\leq & -\int_{M}  \left|\nabla \left( \frac{\vp}{\sqrt{\psi_1 \psi_2}} \right) \right|^2 e^{-f +\log (\psi_1 \psi_2)} \eta^2
   -\frac{1}{2} \int_{M} \vp^2 E e^{-f} \eta^2 \\
   &&-2\int_{M}\left\langle \nabla \left(\frac{\vp}{\sqrt{\psi_1 \psi_2}}\right), \nabla \eta \right\rangle \frac{\vp}{\sqrt{\psi_1 \psi_2}}  e^{-f +\log (\psi_1 \psi_2) } \eta \\
   &\leq &  \int_{M} \vp^2 |\nabla \eta|^2  e^{-f}  -\frac{1}{2} \int_{M} \vp^2 E e^{-f} \eta^2 \\
   &\leq & \frac{C^2}{r^2} \int_{M} \vp^2 e^{-f} -\frac{1}{2} \int_{M} \vp^2 E e^{-f} \eta^2.
\end{eqnarray*}
Since $4 \vp^2 = 4 B_3^2 \leq 4 \|B\|^2 = |\overset{\circ}{\Ric}|^2=|\Ric|^2-\frac{S^2}{4} $,
we obtain, in view of Lemma \ref{basic equ}, Lemma \ref{potential func} and Lemma \ref{Ric int bound},
\begin{equation*}
  \int_{M} \vp^2 e^{-f} \leq \frac{1}{4} \int_{M} \left(|\Ric|^2-\frac{S^2}{4}\right)e^{-f} <\infty.
\end{equation*}
Letting $r \to \infty$ implies that $\int_{M} \vp^2 E e^{-f}=0$. Therefore, we must have either $B_3=0$ or $E=0$. It then follows that $BB^t =b^2 \id$.
\end{proof}

\begin{proposition}
Let $M$ be a four-dimensional gradient shrinking Ricci soliton with positive isotropic curvature. Then $P\leq 0.$
\end{proposition}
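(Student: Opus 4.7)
The plan is to combine Proposition \ref{BB^t} with the simplified expression for $P$ displayed in the preliminaries. By that proposition, $BB^t = b^2 \id$ holds, and the computation recalled in the preliminaries then yields
\begin{equation*}
P \leq -S\left(S \sum_{i=1}^{3} a_i^2 - 12 \sum_{i=1}^{3} a_i^3\right) - S\left(S \sum_{i=1}^{3} c_i^2 - 12 \sum_{i=1}^{3} c_i^3\right).
\end{equation*}
Since the scalar curvature of a complete shrinking Ricci soliton is nonnegative (by the maximum principle applied to the self-similar ancient solution, following Chen), it suffices to show that each of the two parenthesized expressions is nonnegative. The two cases are symmetric, so I will carry out the argument for the $a_i$'s only.

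The heart of the matter is an optimization problem with strictly weaker constraints than those used in \cite{NW08}. The only structural information available is $a_1 + a_2 + a_3 = 0$, $a_1 \leq a_2 \leq a_3$, together with the single one-sided bound coming from positive isotropic curvature: $A_1 + A_2 > 0$, which rewrites as $a_3 < S/6$. First I would use $\sum_{i} a_i = 0$ to derive the identity $\sum_{i} a_i^3 = 3 a_1 a_2 a_3$. If $a_1 a_2 a_3 \leq 0$, the inequality $S \sum_i a_i^2 \geq 12 \sum_i a_i^3$ is immediate, so assume $a_1 a_2 a_3 > 0$. Combined with $\sum_i a_i = 0$, this forces exactly one $a_i$ to be positive, necessarily $a_3$. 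Setting $u = -a_1 \geq v = -a_2 > 0$ and $a_3 = u + v$, a direct computation reduces the desired inequality to
\begin{equation*}
S\,(u^2 + uv + v^2) \geq 18\, uv\,(u+v).
\end{equation*}
The positive isotropic curvature constraint is precisely $S > 6(u+v)$, so it suffices to verify $6(u+v)(u^2 + uv + v^2) \geq 18 uv(u+v)$, which simplifies to the elementary $(u-v)^2 \geq 0$.

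Applying the same argument to $(c_1, c_2, c_3)$ using $C_1 + C_2 > 0$ completes the proof. The main obstacle, compared with the nonnegative curvature operator setting of \cite{NW08}, is that the per-eigenvalue bounds $\frac{S}{12} + a_i \geq 0$ are no longer available --- only the single sum bound $a_3 < S/6$ survives. The substitution above shows that this one-sided constraint is in fact sharp for the problem at hand: the extremal configuration $u = v$ with $S = 6(u+v)$ corresponds exactly to saturation of the PIC condition, so the elementary estimate $(u-v)^2 \geq 0$ absorbs all of the slack and no further curvature hypothesis is needed.
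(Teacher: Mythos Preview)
Your proof is correct, and it reaches the same conclusion as the paper by a genuinely different and more elementary route.

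The paper, after reducing to the same inequality $S\sum a_i^2 - 12\sum a_i^3 \ge 0$, performs the change of variables $x_i = 1 - \tfrac{6}{S}a_i$, which turns the PIC constraints $A_i+A_j>0$ into $x_i>0$ and $\sum x_i = 3$, and then analyzes the resulting cubic $F(x_1,x_2,x_3)=\tfrac{S^3}{36}\bigl(2\sum x_i^3 - 5\sum x_i^2 + 9\bigr)$ by Lagrange multipliers in the interior together with a direct check on the boundary faces $x_i=0$. Your argument instead uses the trace-free identity $\sum a_i^3 = 3a_1a_2a_3$ to dispose of the case $a_1a_2a_3\le 0$ immediately, and in the remaining case makes the substitution $a_1=-u,\ a_2=-v,\ a_3=u+v$ to reduce everything to the single inequality $(u-v)^2\ge 0$, using only the one PIC constraint $A_1+A_2>0$ (equivalently $S>6(u+v)$). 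So you use strictly less of the PIC hypothesis (one pairwise sum rather than all three) and avoid any optimization machinery, at the cost of a short case split; the paper's approach, on the other hand, makes the structure of the constraint set and the location of the extremals more transparent. One very minor remark: you invoke Chen's $S\ge 0$ for shrinkers, but in fact PIC already forces $S>0$ directly (since $A_1+A_2>0$ and $A_3\ge A_2$ imply $\tfrac{S}{4}=A_1+A_2+A_3>0$), so that external input is not needed here.
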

\begin{proof}
  Recall from Section 2, since $BB^t = b^2 \id $, we have
  \begin{eqnarray*}
  P \leq  -S\left(S\sum_{1}^{3}a_i^2- 12\sum_{1}^{3}a_i^3 \right) -S\left(S\sum_{1}^{3}c_i^2- 12\sum_{1}^{3}c_i^3 \right).
\end{eqnarray*}
In order to prove $P\leq 0$, it suffices to show that
\begin{equation*}
S\sum_{1}^{3}a_i^2- 12\sum_{1}^{3}a_i^3 \geq 0.
\end{equation*}
With suitable choices of the orthonormal basis for $\wedge_+$, we can assume that $A_i =\frac{S}{12} +a_i$.
Note that we have the constraints $\sum_{1}^{3}a_i =0$ and $A_i+A_j =\frac{S}{6}+a_i+a_j > 0$  for $i\neq j$ because $M$ has positive isotropic curvature.
By the change of variables $x_i =1- \frac{6}{S}a_i$, the constraints become $\sum_{1}^{3} x_i =3$ and $x_i > 0$ for $1\leq i \leq 3$,
and the objective function becomes
\begin{eqnarray*}
F(x_1,x_2,x_3) &:= &S\sum_{1}^{3}a_i^2- 12\sum_{1}^{3}a_i^3 =  \frac{S^3}{36}\left(\sum_{1}^{3} (1-x_i)^2 -2(1-x_i)^3\right)  \\
  &=& \frac{S^3}{36} \left(2 \sum_{1}^{3} x_i^3- 5 \sum_{1}^{3} x_i^2 +9\right).
\end{eqnarray*}

Using Lagrange multipliers, we find two critical points $Z=(1,1,1)$ and $W=\left(\frac{1}{3}, \frac{4}{3},  \frac{4}{3}\right) $
with $F(Z)= 0$ and $F(W)=\frac{S^3}{162} $. On the boundary, we have $x_i=0$.
Since $F$ is symmetric, we can assume without loss of generality that $x_1 =0$.
Then we have, using $x_3 =3-x_2$, that
$$F(x_1,x_2,x_3) = \frac{S^3}{36} \left(2(x_2^3+(3-x_2)^3 ) -5(x_2^2+(3-x_2)^2 ) +9\right) = \frac{S^3}{18}(2x_2 -3)^2 \geq 0.$$
Therefore, under the constraints  $\sum_{1}^{3}a_i =0$ and $\frac{S}{6}+a_i+a_j \geq 0$ for $i\neq j$,
$$F(x_1,x_2,x_3) = S\sum_{1}^{3}a_i^2- 12\sum_{1}^{3}a_i^3 \geq 0.$$
The terms involving $c_i$'s can be handled similarly. Hence $P\leq 0$.
\end{proof}

\begin{proof}[Proof of Theorem \ref{Main Thm}]
Recall it was shown in \cite[Proposition 4.2]{NW08a} that if $S\neq 0$, then
\begin{equation}\label{evol}
  \left(\frac{\partial }{\partial t} -\Delta \right) \left(\frac{|R_{ijkl}|^2}{S^2} \right)= \frac{4P}{S^3} -\frac{2}{S^4}|S\nabla_pR_{ijkl}-\nabla_pS R_{ijkl}|^2
  +\left\langle \nabla \left(\frac{|R_{ijkl}|^2}{S^2} \right), \nabla \log{S^2} \right\rangle,
\end{equation}
Trying to use integration by parts here would require a stronger integral bound of the Ricci curvature than we actually have in Lemma \ref{Ric int bound}.
To overcome this difficulty, we adopt a similar idea that was used to prove $BB^t =b^2 \id$.
We consider, instead, $u =\frac{|R_{ijkl}|}{S}$ and $T=\frac{R_{ijkl}}{S}$.
A direct calculation shows that
\begin{eqnarray*}
  \left(\frac{\partial }{\partial t} -\Delta \right) u
  &=& \frac{2P}{uS^3} +\langle \nabla u, \nabla \log{S^2} \rangle + \frac{|\nabla u|^2-|\nabla T|^2}{u}\\
   &\leq &  \frac{2P}{uS^3} +\langle \nabla u, \nabla \log{S^2} \rangle\\
\end{eqnarray*}
where we have used Kato's inequality in the last line.
Now let $\eta$ be a smooth cut-off function with support in $\{x\in M : d(x, x_0)\leq r\}$ and $|\nabla \eta | \leq C/r$.
Multiplying the above inequality by $u e^{-f+\log{S^2} }\eta^2 $ and integrating over $M$, we obtain,
\begin{eqnarray*}
&& \int_{M} \langle \nabla f, \nabla u \rangle u e^{-f+\log{S^2} }\eta^2 -\int_{M} \Delta u \mbox{ } u e^{-f+\log{S^2} }\eta^2  \\
 &\leq & \int_{M} \frac{2P}{S^3}  e^{-f+\log{S^2} }\eta^2  + \int_{M}  \langle \nabla u, \nabla \log{S^2} \rangle u e^{-f+\log{S^2} }\eta^2.
\end{eqnarray*}
Integration by parts then yields
\begin{eqnarray*}
&&\frac{1}{2} \int_{M} |\nabla u|^2 e^{-f+\log{S^2} }\eta^2 -2\int_{M} \frac{P}{S} e^{-f} \\
   &\leq &  -\frac{1}{2} \int_{M} |\nabla u|^2 e^{-f+\log{S^2} }\eta^2 -2 \int_{M} \langle \nabla u, \nabla \phi \rangle u \eta e^{-f+\log{S^2} }\\
   &\leq & 8 \int_{M} |\nabla \eta |^2 u^2 e^{-f+\log{S^2}} \\
   &\leq & \frac{C}{r^2}  \int_{M}  |R_{ijkl}|^2 e^{-f}.
\end{eqnarray*}
The integral $\int_{M}  |R_{ijkl}|^2 e^{-f}$ is finite in view of Lemma \ref{Ric int bound},
since if $M$ has positive isotropic curvature, then the components of curvature operator $A, B$ and $C$ can be estimated by
\begin{eqnarray*}
 && -\frac{S}{4} \leq A_1 \leq A_2 \leq A_3 \leq \frac{S}{4}, \\
 && -\frac{S}{4} \leq C_1 \leq C_2 \leq C_3 \leq \frac{S}{4}, \\
 && 4 \|B\| \leq |\overset{\circ}{\Ric}|.
\end{eqnarray*}
Therefore we know that $u$ is a positive constant and $P=0$ by letting $r\to \infty$.
Then it follows from \eqref{evol} that $|S\nabla_pR_{ijkl}-\nabla_pS R_{ijkl}|^2=0$.
Theorem \ref{Main Thm} then follows from the proof of the main theorem in \cite{NW08a}.
\end{proof}

The strong maximum principle together with the classification of positive case implies the following corollary for the solitons with nonnegative isotropic curvature.

\begin{corollary} If $(M, g, f)$ is a complete gradient shrinking soliton with nonnegative isotropic curvature then its universal cover must be one of the following spaces $\mathbb{S}^4$, $\mathbb{C}P^2$, $\mathbb{S}^2\times \mathbb{S}^2$, $\mathbb{S}^2\times \mathbb{R}^2$ and $\mathbb{S}^3\times \mathbb{R}$.
\end{corollary}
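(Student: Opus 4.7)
The plan is to upgrade Theorem \ref{Main Thm} to the nonnegative isotropic curvature setting by combining it with Hamilton's strong maximum principle applied along the Ricci flow started at the soliton. Nonnegative isotropic curvature is preserved by the Ricci flow in dimension four \cite{Ham97}, so the flow $g(t)$ starting at $g$ satisfies $\psi_1 = A_1 + A_2 \geq 0$ and $\psi_2 = C_1 + C_2 \geq 0$ throughout. Each $\psi_i$ satisfies an inequality of the form $(\partial/\partial t - \Delta)\psi_i \geq 0$ via Proposition \ref{diff inequ}, with nonnegative obstruction term. Applying the scalar strong maximum principle (adapted to the complete noncompact soliton by the weighted-integral technique developed in the proof of Proposition \ref{BB^t}, in which Lemma \ref{Ric int bound} suffices to kill the boundary terms) yields for each $i$ the dichotomy: either $\psi_i > 0$ everywhere, or $\psi_i \equiv 0$. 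By the self-similarity of the soliton, both alternatives descend to the initial time.

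If both $\psi_1 > 0$ and $\psi_2 > 0$ hold, then $(M,g)$ has positive isotropic curvature and Theorem \ref{Main Thm} identifies the universal cover as a quotient of $\mathbb{S}^4$ or $\mathbb{S}^3 \times \mathbb{R}$.

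Suppose instead that $\psi_1 \equiv 0$ (the case $\psi_2 \equiv 0$ being symmetric under reversing orientation). Substituting $\psi_1 = 0$ into the first inequality of Proposition \ref{diff inequ} forces $A_1^2 + A_2^2 + B_1^2 + B_2^2 \leq 0$, so $A_1 = A_2 = B_1 = B_2 = 0$ and $A_3 = S/4$. Consequently the self-dual curvature operator $A$ has a two-dimensional null subbundle of $\wedge_{+}$, which Hamilton's strong maximum principle for tensor systems promotes to a parallel subbundle; the orthogonal parallel line in $\wedge_{+}$ is spanned by a self-dual two-form of constant length, which defines a parallel almost complex structure $J$ compatible with $g$. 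Thus $(M,g,J)$ is K\"ahler, and the shrinking soliton equation \eqref{eq:soliton1} becomes a K\"ahler-Ricci shrinking soliton equation in complex dimension two. Passing to the universal cover and invoking the de Rham decomposition together with the classification of complex one-dimensional shrinkers (round $\mathbb{S}^2$ or the Gaussian soliton on $\mathbb{R}^2 \cong \mathbb{C}$) and of irreducible K\"ahler-Einstein shrinkers in complex dimension two compatible with nonnegative isotropic curvature then narrows the cover to one of $\mathbb{C}P^2$, $\mathbb{S}^2 \times \mathbb{S}^2$, or $\mathbb{S}^2 \times \mathbb{R}^2$.

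The main technical obstacle is the rigorous application of the strong maximum principle in the noncompact soliton setting, both in its scalar form for $\psi_1$ and $\psi_2$ and in its tensor form to promote the null space of $A$ to a parallel subbundle. This is handled by mimicking the integration-by-parts scheme used for Proposition \ref{BB^t}, where Lemma \ref{Ric int bound} again ensures that all relevant cutoff error terms vanish in the limit. The secondary difficulty of matching the degenerate K\"ahler case to the three specific spaces above reduces to standard facts about four-dimensional K\"ahler shrinking Ricci solitons whose curvature operator satisfies the nonnegativity constraints inherited from nonnegative isotropic curvature.
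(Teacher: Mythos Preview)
Your proposal is correct and follows the same route as the paper, whose entire proof of this corollary is the single sentence ``The strong maximum principle together with the classification of positive case implies the following corollary''; you have simply unpacked that sentence into the natural dichotomy for $\psi_1,\psi_2$, Theorem~\ref{Main Thm} in the strictly positive case, and a K\"ahler/splitting reduction in the degenerate case. Two minor remarks: the parabolic strong maximum principle is a local statement and requires no weighted integral estimates, so your appeal to the cutoff technique of Proposition~\ref{BB^t} and Lemma~\ref{Ric int bound} at that step is unnecessary; and your final identification of the K\"ahler case with the list $\mathbb{CP}^2$, $\mathbb{S}^2\times\mathbb{S}^2$, $\mathbb{S}^2\times\mathbb{R}^2$ is only a sketch (in particular ``irreducible K\"ahler--Einstein shrinkers'' is not quite the right class, since K\"ahler shrinking solitons need not be Einstein), but the paper supplies no further detail here either.
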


\begin{bibdiv}
\begin{biblist}

\bib{BS09}{article}{
 author={Brendle, Simon},
 author={Schoen, Richard},
  title={Manifolds with 1/4-pinched curvature are space forms},
   journal={J. Amer. Math. Soc.},
   volume={ 22},
   date={2009},
   number={1},
   pages={287--307},
 review={\MR{2449060}}
}

\bib{CZ10}{article}{
    AUTHOR = {Cao, Huai-Dong},
    author={ Zhou, Detang},
     TITLE = {On complete gradient shrinking {R}icci solitons},
   JOURNAL = {J. Differential Geom.},
    VOLUME = {85},
      date = {2010},
    NUMBER = {2},
     PAGES = {175--185},
      ISSN = {0022-040X},
     review = {\MR{2732975}},
}

\bib{hamilton-sin}{article}
{
author={Hamilton, Richard S.},
 title={The formation of singularities in the Ricci flow},
  journal={Surveys in differential geometry},
   volume={ II},
   date={ 1993},
   pages={ 7--136},
   review={\MR{1375255}}
    }

\bib{Ham97}{article}{
author={Hamilton, Richard S.},
 title={Four-manifolds with positive isotropic curvature},
 journal= {Comm. Anal. Geom.},
 volume={ 5},
 date= {1997},
 number={  1},
 pages={ 1--92},
 review={\MR{1456308}}
}

\bib{KW}{article}{
 author={ Kotschwar, Brett},
 author={Wang, Lu},
  title={Rigidity of asymptotically conical shrinking gradient Ricci solitons},
  journal={J. Differential Geom.},
  volume={ 100},
  year={2015},
  number={ 1},
  pages= {55--108},
  review={\MR{3326574}}

}

\bib{MM88}{article}{
author={Micallef, Mario J.},
author={Moore, John Douglas},
title={Minimal two-spheres and the topology of manifolds with positive curvature on totally isotropic two-planes},
journal={ Ann. of Math. (2)},
volume= {127},
year={1988},
number={ 1},
pages={199--227},
review={\MR{0924677}}
}

\bib{MS13}{article}{
    AUTHOR = {Munteanu, Ovidiu},
author= {Sesum, Natasa},
     TITLE = {On gradient {R}icci solitons},
   JOURNAL = {J. Geom. Anal.},
    VOLUME = {23},
      date = {2013},
    NUMBER = {2},
     PAGES = {539--561},
      ISSN = {1050-6926},
review = {\MR{3023848}},
      DOI = {10.1007/s12220-011-9252-6},
}

\bib{MW15a}{article}{
author={Munteanu, Ovidiu},
author={Wang, Jiaping},
title={ Geometry of shrinking Ricci solitons},
journal={ Compos. Math.},
volume={ 151},
date={2015},
pages={ 2273--2300.}
}

\bib{MW15b}{article}{
author={Munteanu, Ovidiu},
author={Wang, Jiaping},
title={Positively curved shrinking Ricci solitons are compact},
status={ArXiv preprint.}
}

\bib{Naber}{article}{
author={Naber, Aaron},
 title={Noncompact shrinking four solitons with nonnegative curvature},
  journal={J. Reine Angew. Math.},
  volume={645},
  date={2010},
  pages={ 125--153},
  review={\MR{2673425}}
  }

\bib{Ng10}{article}
{
author={Nguyen, Huy T.},
title={
Isotropic curvature and the Ricci flow},
journal={
Int. Math. Res. Not. IMRN},
date={ 2010},
number={3},
pages={536–-558},
review={\MR{MR2587576}}

}

\bib{NW08a}{article}{
author = {Ni, Lei},
author={Wallach, Nolan},
     title = {On a classification of gradient shrinking solitons},
   journal = {Math. Res. Lett.},
   volume={15},
   date={2008},
   number={5},
   pages={941--955},
   issn={1073-2780},
   review={\MR{2443993}},
   doi={10.4310/MRL.2008.v15.n5.a9},
      }

\bib{NW08}{article}
{
author = {Ni, Lei},
author={Wallach, Nolan},
     title = {On four-dimensional gradient shrinking solitons},
   journal = {Int. Math. Res. Not. IMRN},
      date = {2008},
    number = {4},
     pages = {Art. ID rnm152, 13},
      issn = {1073-7928},
  review = {\MR{2424175 (2010g:53123)}},
       DOI = {10.1093/imrn/rnm152},
       URL = {http://dx.doi.org/10.1093/imrn/rnm152},
}
	
\bib{Perelman}{article}
{
 author={Perelman, Grisha},
 title={ Ricci flow with surgery on three-manifolds},
 status={arXiv:\ math.DG/\ 0303109}
}

\end{biblist}
\end{bibdiv}

\end{document}